\def\ps@pprintTitle{%
	\let\@oddhead\@empty
	\let\@evenhead\@empty
	\let\@oddfoot\@empty
	\let\@evenfoot\@oddfoot
}
\newtheorem{lemma}{Lemma}
\newtheorem*{theorem*}{Theorem}
\newcommand{\source}{\mathfrak{s}}
\newcommand{\targets}{\mathfrak{T}}
\newcommand{\target}{\mathsf{t}}
\newcommand{\Prob}{\texttt{Pr}}
\newcommand{\prob}{\mathbb{P}}
\newcommand{\rel}{\mathfrak{Rel}}
\newcommand{\bags}{\mathfrak{B}}
\newcommand{\problem}[1]{\textsc{#1}}
\begin{document}

\begin{frontmatter}

\title{An Efficient Algorithm for Computing\\ Network Reliability in Small Treewidth}

\author{Amir Kafshdar Goharshady}
\address{IST Austria (Institute of Science and Technology Austria)\\Am Campus 1, 3400 Klosterneuburg, Austria\\ amir.goharshady@ist.ac.at}

\author{Fatemeh Mohammadi}
\address{School of Mathematics, University of Bristol\\University Walk, Bristol, BS8 1TW, United Kingdom\\
fatemeh.mohammadi@bristol.ac.uk}

\begin{abstract}
	We consider the classic problem of \problem{Network Reliability}. A network is given together with a source vertex, one or more target vertices, and probabilities assigned to each of the edges. Each edge appears in the network with its associated probability and the problem is to determine the probability of having at least one source-to-target path. This problem is known to be NP-hard.
	
	We present a linear-time fixed-parameter algorithm based on a parameter called treewidth, which is a measure of tree-likeness of graphs. \problem{Network Reliability} was already known to be solvable in polynomial time for bounded treewidth, but there were no concrete algorithms and the known methods used complicated structures and were not easy to implement. We provide a significantly simpler and more intuitive algorithm that is much easier to implement.
	
	We also report on an implementation of our algorithm and establish the applicability of our approach by providing experimental results on the graphs of subway and transit systems of several major cities, such as London and Tokyo. To the best of our knowledge, this is the first exact algorithm for \problem{Network Reliability} that can scale to handle real-world instances of the problem.  
\end{abstract}

\begin{keyword}
Network Reliability, Fixed-parameter Algorithms, Tree Decomposition, Treewidth, FPT
\end{keyword}

\end{frontmatter}

\section{Introduction}

\noindent\textbf{\problem{Network Reliability}.} Consider a network modeled as a graph $G = (V, E)$, where each edge $e \in E$ has a known probability of failure. For example, the graph might be a model of communication links in a mobile network or railway lines between subway stations. Given a source vertex $\source$ and a set $\targets$ of target vertices, the goal of the \problem{Network Reliability} problem is to assess the reliability of connections between $\source$ and $\targets$. Concretely, the \problem{Network Reliability} problem asks for the probability of existence of at least one source-to-target path that does not pass through failed edges. In the examples mentioned above, this is equivalent to asking for the probability of being able to send a message from $\source$ to $\targets$ through the mobile network or the probability of being able to travel from $\source$ to $\targets$ in the subway network.

\smallskip\noindent\textbf{Short History.} Network reliability is an important and well-studied problem with surveys appearing as early as 1983~\cite{reliabilitySurvey}. Aside from the obvious applications such as the two mentioned above, the problem has many other surprising applications, including analysis and elimination of redundancy in electronic systems and electrical power networks~\cite{reliabilityApplication}. 
\problem{Network Reliability} was shown to be NP-hard for general graphs~\cite{ball1986computational} and hence researchers turned to solving it in special cases~\cite{ball1986computational,reliabilitySurvey}, such as series-parallel graphs~\cite{satyanarayana1985linear} and graphs with limited number of cuts~\cite{provan1984computing}. Genetic~\cite{coit1996reliability}, randomized~\cite{karger2001randomized}, approximate~\cite{srivaree2002estimation} and Monte Carlo~\cite{gertsbakh2016models} algorithms are studied extensively as well. There are also several algebraic studies of the problem with the goal of obtaining bounds in series-parallel and other special families of graphs~\cite{brown1996cohen,fm2016combinatorial,saenz2015hilbert,mohammadi2016algebraic,mohammadi2016divisors}. Several variants of the problem are defined~\cite{guidotti2017network,zhang2017game,yeh2015new}, and approaches to modify the network for its optimization are also investigated~\cite{gutjahr1996configurations}. In this paper, we consider a parameterization of \problem{Network Reliability} and obtain a linear-time algorithm. 

\smallskip\noindent\textbf{Parameterized Algorithms.} An efficient \emph{parameterized} algorithm solves an  optimization problem in polynomial time with respect to the size of input, but possibly with non-polynomial dependence on a specific aspect of the input's structure, which is called a ``parameter"~\cite{niedermeier2002invitation}. A problem that can be solved by an efficient parameterized algorithm is called \emph{fixed-parameter tractable} (FPT). For example, there is a polynomial-time algorithm for computing minimal cuts in graphs whose runtime is exponentially dependent on the size of the resulting cut~\cite{cygan2015parameterized}. Exploiting the additional benefit of having a parameter, parameterized complexity provides finer detail than classical complexity theory~\cite{downey2012parameterized}. 

\smallskip\noindent\textbf{Treewidth.} A well-studied parameter for graphs is the \emph{treewidth}, which is a measure of tree-likeness of graphs~\cite{robertson1990graph}. Many hard problems are shown to have efficient solutions when restricted to graphs with small treewidth~\cite{courcelle1990monadic,chatterjee2016algorithms,courcelle1993monadic,datapacking,bodlaender1988dynamic,chatterjee2018algorithms,chatterjee2016Jtdec}. Notably,~\cite{wolle2002framework} introduces a general framework that shows several variants of the network reliability problem can be solved in polynomial time when parameterized by the treewidth.
Many real-world graphs happen to have
small treewidth~\cite{bodlaender1994tourist,thorup1998all,gustedt2002treewidth,amirtr}. In this work, we show that subway and transit networks often have this property.

\smallskip\noindent\textbf{Our contribution.} Our contribution is providing a new fixed-parameter algorithm for finding the exact value of \problem{Network Reliability}, using treewidth as the parameter. Our algorithm, while being linear-time, is much shorter and simpler than the general framework utilized in~\cite{wolle2002framework}. We also provide an implementation of our algorithm and experimental results over the graphs of subway networks of several major cities. To the best of our knowledge, this is the first algorithm for finding the exact value of \problem{Network Reliability} that can scale to handle real-world instances, i.e.~subway and transit networks of major cities.

Graphs with constant treewidth are the most general family of networks for which exact algorithms for computing reliability are found. This family contains trees, series-parallel graphs and outerplanar graphs~\cite{bodlaender1994tourist}.

\smallskip\noindent\textbf{Structure of the Paper.} The present paper is organized as follows: First, Section~\ref{sec:reliability} provides formal definitions of the \problem{Network Reliability} problem and Treewidth. Then, Section~\ref{sec:algo}, which is the main part of the paper, presents our simple linear algorithm for solving \problem{Network Reliability} in graphs with constant treewidth. Section~\ref{sec:exp} contains a report of our implementation, which is publicly available, and establishes the applicability of our approach by providing experimental results on real-world subway networks.

\section{Preliminaries} \label{sec:reliability}

In this section, we formalize our notation, and define the problem of \textsc{Network Reliability} and the notion of treewidth.

\smallskip\noindent\textbf{Multigraphs.} A multigraph is a pair $G = (V, E)$ where $V$ is a finite set of vertices and $E$ is a finite multiset of edges, i.e.~each $e \in E$ is of the form $\{u, v\}$ for $u, v \in V.$ The vertices $u$ and $v$ are called the endpoints of $e$. Note that $E$ might contain distinct edges that have the same endpoints. In the sequel, we only consider multigraphs and simply call them graphs for brevity.

\smallskip\noindent\textbf{Notation.} Given a graph $G = (V, E)$, a \emph{path} from $u \in V$ to $w \in V$ is a finite sequence $u=u_0, u_1, \ldots, u_l=w$ of distinct vertices such that for every $i<l$, there exists an edge $e = \{u_i, u_{i+1}\} \in E.$ We write $u \leadsto_G w$ to denote the existence of a path from $u$ to $w$ in $G$. We simply write $u \leadsto w$ if $G$ can be deduced from the context. For a set $E' \subseteq E$, we write $u \leadsto_{E'} w$ if there exists a path from $u$ to $w$ whose every edge is in $E'$. A \emph{connected component} of a graph $G$ is a maximal subset  $C \subseteq V$ such that for every $c_1, c_2 \in C$, we have $c_1 \leadsto_G c_2.$ The graph $G$ is called \emph{connected} if it has exactly one connected component. A \emph{cycle} in the graph $G$ is a sequence $w_0, w_1, \ldots, w_l$ of vertices with $l>0$, such that for every $i<l$, there exists an edge $e = \{w_i, w_{i+1}\} \in E$ and all $w_i$ are distinct except that $w_0 = w_l$. A graph is called a \emph{forest} if it has no cycles. A forest is called a \emph{tree} if it is connected. In other words, a tree is a connected graph with no cycles.

\smallskip\noindent\textbf{Network Reliability Problem.}
A \problem{Network Reliability} problem \emph{instance} is a tuple $I = (G, \source, \targets, \Prob)$ where $G = (V, E)$ is a connected multigraph, $\source \in V$ is a ``source'' vertex and $\targets \subseteq V$ a set of ``target'' vertices. $\Prob$ is a function of the form $\Prob : E \rightarrow \left[0, 1\right]$ which assigns a probability to every edge of the graph $G$. The reliability problem on instance $I$ is then defined as follows: A new graph $G^s$ is probabilistically constructed such that its vertex set is $V$ and each edge $e \in E$ appears in it with probability $\Prob(e)$. Appearance of the edges are stochastic and independent of each other. The \problem{Network Reliability} problem asks for the probability $\rel(I)$ of having at least one path from the source vertex $\source$ to a target vertex $\target \in \targets$ in $G^s$.

\smallskip\noindent\textbf{Remark.} We are describing our approach on undirected graphs. However, it is straightforward to change all the steps of the algorithm to handle direcetd graphs as well.

We now provide a quick overview of the basics of tree decompositions and treewidth. A much more involved treatment can be found in~\cite{cygan2015parameterized,bodlaender1994tourist}.

\smallskip\noindent\textbf{Tree Decompositions.}
Given a connected multigraph $G = (V, E)$, a tree $T = (\bags, E_T)$  with vertex set $\bags$ and edge set $E_T$ is called a tree decomposition of $G$ if the following four conditions hold:
\begin{itemize}
	\item Each vertex $b \in \bags$ of the tree $T$ has an assigned set of vertices $V(b)\subseteq V.$ To distinguish vertices of $T$ and $G$, we call each vertex of $T$ a \emph{bag}. 
	\item Each vertex appears in some bag, i.e. $\bigcup_{b \in \bags} V(b)=V.$
	\item Each edge appears in some bag, i.e. $\forall e = \{u, v\} \in E~~\exists b \in \bags ~\mathrm{s.t.}~ \{u, v\} \subseteq V(b)$. We denote the set of edges that appear in a bag $b$ with $E(b)$. Note that an edge appears in $b$ if and only if both of its vertices do.
	\item Each vertex $v \in V$ appears in a connected subtree of $T$. More precisely, we let $\bags_v$ to be the set of bags whose vertex sets contain $v$, then $\bags_v$ must be a connected subtree of $T$.
\end{itemize}

\begin{figure}[h]
	\begin{center}
		\begin{minipage}{0.4\textwidth}
			\begin{flushright}
				\includegraphics[scale=0.4]{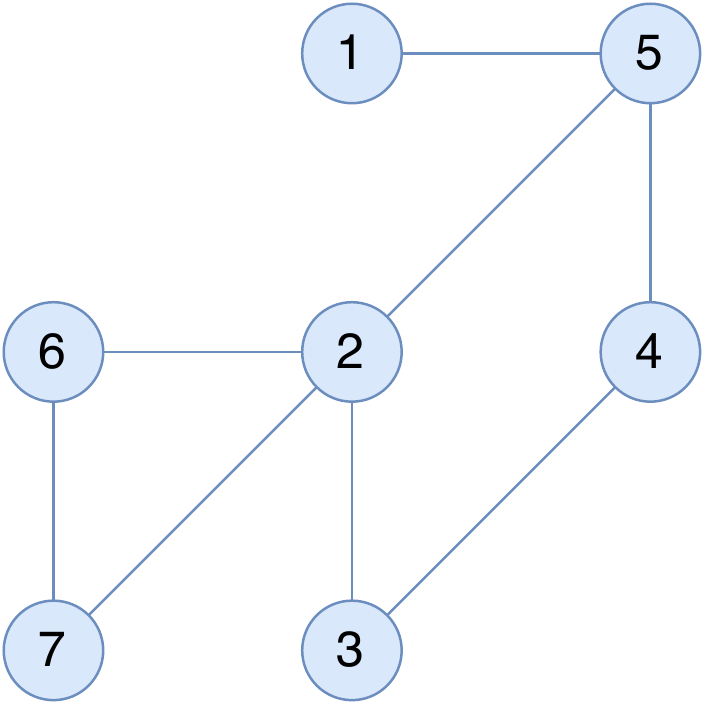}
			\end{flushright}
		\end{minipage}
		\begin{minipage}{0.4\textwidth}
			\begin{flushleft}
				\includegraphics[scale=0.4]{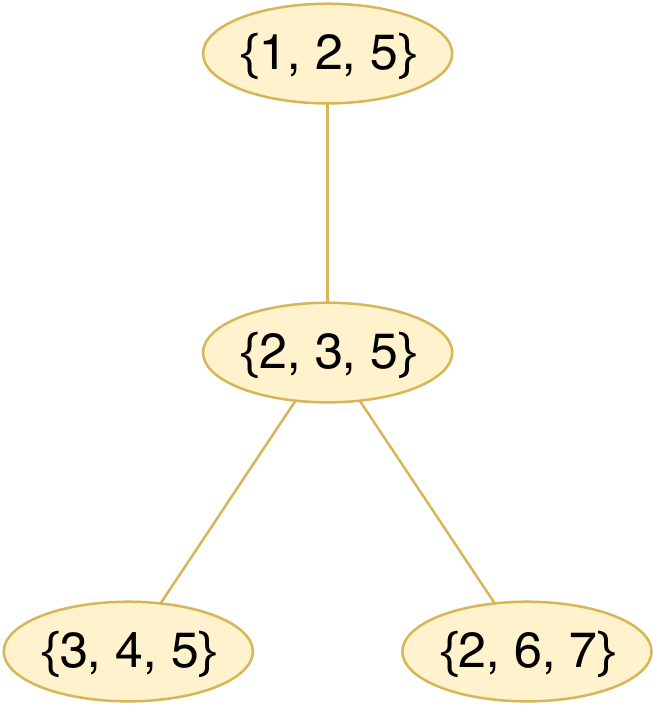}
			\end{flushleft}
		\end{minipage}
	\end{center}
	\caption{A graph (left) and a $2$-decomposition of it (right)}
	\label{fig:tw}
\end{figure}

See Figure~\ref{fig:tw} for an example.
A rooted tree decomposition is a tree decomposition in which a unique bag
is specified as ``root". Given two bags $b$ and $b'$, we say that $b$ is an \emph{ancestor} of $b'$ if $b$ appears in the unique path from the root to $b'$. In this case, we say that $b'$ is a \emph{descendant} of $b$. Note that each bag is both an ancestor and a descendant of itself. The bag $b$ is called the \emph{parent} of $b'$ if it is an ancestor of $b'$ and has an edge to $b'$, i.e.~$\{b, b'\} \in E_T$. In this case, we say that $b'$ is a \emph{child} of $b$. A bag $b$ with no children is called a \emph{leaf}.

\smallskip\noindent\textbf{Treewidth.} If a tree decomposition $T$ has bags of size at most $k+1$, then it is called a $k$-decomposition or a decomposition of \emph{width} $k$. The \emph{treewidth} of a graph $G$ is defined as the smallest $k$ for which a $k$-decomposition of $G$ exists. Intuitively, the treewidth of a graph measures how tree-like it is and graphs with smaller treewidth are more similar to trees.

\smallskip\noindent\textbf{Cut Property.} Tree decompositions are important for algorithm design because removing the vertices of each bag $b$ from the original graph $G$ cuts it into connected components corresponding to the subtrees formed in $T$ by removing $b$~\cite{bodlaender1994tourist}. We call this the ``cut property'' and it allows bottom-up dynamic programming algorithms to operate on tree decompositions almost the same way as in trees~\cite{bodlaender1988dynamic}. We use this property in our algorithm in Section~\ref{sec:algo}. We now formalize this point: 

\smallskip\noindent\textbf{Separators.} Given a graph $G = (V, E)$ and two sets of vertices $A, B \subseteq V$, we call the pair $(A, B)$ a separation of $G$ if (i)~$A \cup B = V$, and (ii)~no edge connects a vertex in $A \setminus B$ to a vertex in $B \setminus A$. We call $A \cap B$ the separator corresponding to the separation $(A, B)$.

\begin{lemma}[Cut Property~\cite{cygan2015parameterized}] \label{lemma:cut}
	Let $T = (\bags, E_T)$ be a tree decomposition of the graph $G$ and let $e = \{a, b\} \in E_T$ be an edge of $T$. By removing $e$, $T$ breaks into two connected components, $T^a$ and $T^b$, respectively containing $a$ and $b$. Let $A = \bigcup_{t \in T^a} V(t)$ and $B = \bigcup_{t \in T^b} V(t)$. Then $(A, B)$ is a separation of $G$ with separator $V(a) \cap V(b)$.
\end{lemma}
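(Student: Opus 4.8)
The plan is to verify the two defining conditions of a separation and then pin down the separator, splitting the argument into three claims in increasing order of difficulty. First I would show $A \cup B = V$; this is immediate from the covering axiom of tree decompositions, since every vertex of $G$ lies in some bag, and after deleting $e$ every bag of $T$ lies in exactly one of the components $T^a$ or $T^b$. Hence $A \cup B = \bigcup_{t \in \bags} V(t) = V$.

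Next I would establish condition (ii), namely that no edge joins a vertex of $A \setminus B$ to a vertex of $B \setminus A$. I would argue by contradiction: suppose $\{u, v\} \in E$ with $u \in A \setminus B$ and $v \in B \setminus A$. By the edge-covering axiom there is a bag $c$ containing both $u$ and $v$, and this $c$ lies in $T^a$ or in $T^b$. If $c \in T^a$ then $v \in V(c) \subseteq A$, contradicting $v \notin A$; symmetrically, $c \in T^b$ forces $u \in V(c) \subseteq B$, contradicting $u \notin B$. So no such edge exists.

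It remains to identify the separator, which by definition is $A \cap B$, so I must prove $A \cap B = V(a) \cap V(b)$. The inclusion $V(a) \cap V(b) \subseteq A \cap B$ is easy, since $a \in T^a$ gives $V(a) \subseteq A$ and $b \in T^b$ gives $V(b) \subseteq B$. I expect the reverse inclusion $A \cap B \subseteq V(a) \cap V(b)$ to be the crux, and it is exactly here that the connected-subtree axiom is indispensable. Given $v \in A \cap B$, there are bags $t_A \in T^a$ and $t_B \in T^b$ with $v \in V(t_A)$ and $v \in V(t_B)$. The unique $t_A$--$t_B$ path in $T$ must traverse the deleted edge $e = \{a, b\}$, since $t_A$ and $t_B$ lie in different connected components of $T$ after removing $e$. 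Because $\bags_v$ is a connected subtree containing the endpoints $t_A$ and $t_B$, it contains this entire path, and in particular both $a$ and $b$; hence $v \in V(a)$ and $v \in V(b)$, so $v \in V(a) \cap V(b)$. Combining the three claims completes the proof.
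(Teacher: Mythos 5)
Your proof is correct and complete: the covering axiom gives $A \cup B = V$, the edge-covering axiom rules out edges between $A \setminus B$ and $B \setminus A$, and the connected-subtree axiom yields $A \cap B = V(a) \cap V(b)$, which is exactly the standard argument. The paper itself does not prove this lemma but cites it from the parameterized-algorithms literature, and your argument matches the proof given there, so there is nothing to add.
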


\begin{figure}[h]
	\begin{center}
		\begin{minipage}{0.4\textwidth}
			\begin{flushright}
				\includegraphics[scale=0.4]{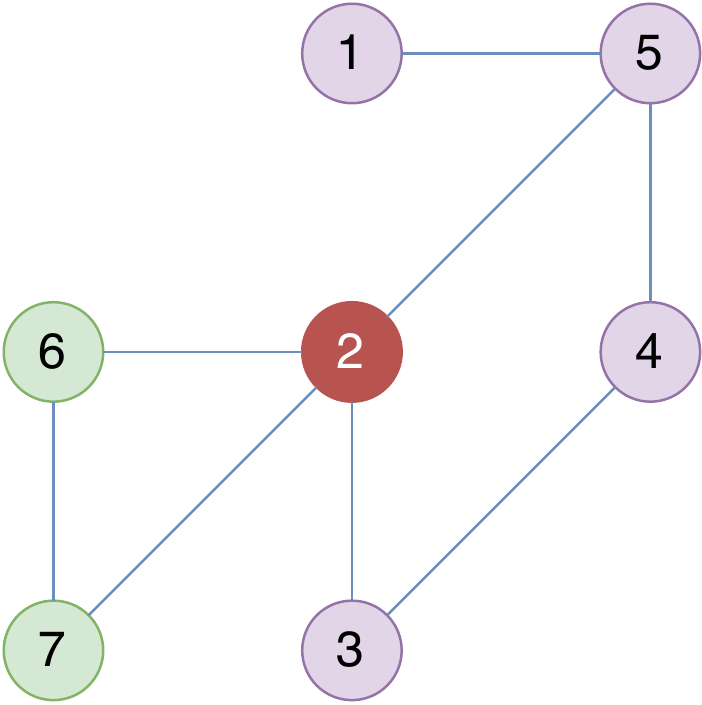}
			\end{flushright}
		\end{minipage}
		\begin{minipage}{0.4\textwidth}
			\begin{flushleft}
				\includegraphics[scale=0.4]{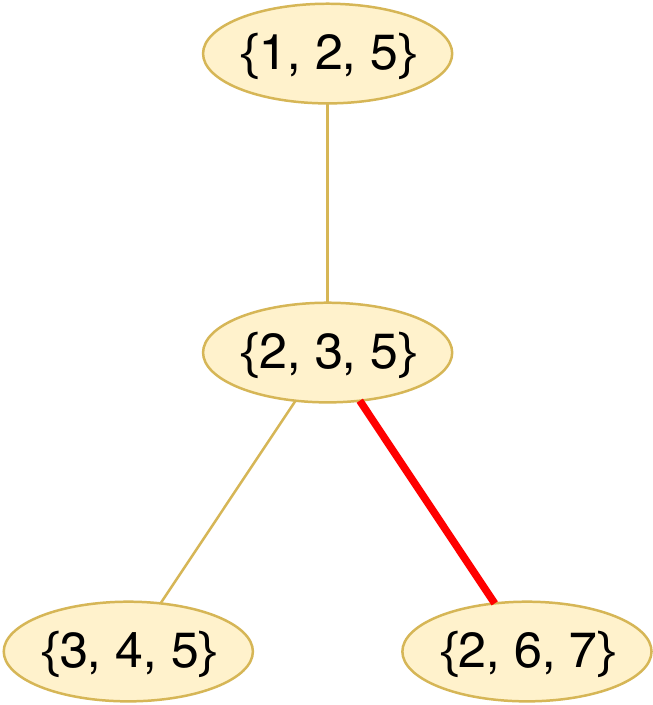}
			\end{flushleft}
		\end{minipage}
	\end{center}
	\caption{Cut Property}
	\label{fig:cut}
\end{figure}

For example, consider the same graph as in Figure~\ref{fig:tw}. By removing the edge from $\{2, 3, 5\}$ to $\{2, 6, 7\}$, the tree decomposition breaks into two connected components, containing the vertices $A = \{2, 6, 7 \}$ and $B = \{ 1, 2, 3, 4, 5 \}$, respectively. This is a separation of the graph with separator $\{2\}.$ These points are illustrated in Figure~\ref{fig:cut}.

\smallskip\noindent\textbf{Computing a Tree Decomposition.} In our algorithm in Section \ref{sec:algo}, when we operate on a graph $G$ with $n$ vertices and constant treewidth, we assume that we are also given a tree decomposition of $G$ as part of the input. This is justified by an algorithm of Bodlaender~\cite{bodlaender1996linear}, that given a graph $G$ and a constant $k$, decides in \emph{linear time} whether $G$ has treewidth at most $k$ and if so, produces a $k$-decomposition of $G$ with $O(n \cdot k)$ bags.

\section{Algorithm for \problem{Network Reliability}} \label{sec:algo}

In this section, we provide an algorithm for solving instances
of the \problem{Network Reliability} problem on graphs based on their tree decompositions.

\smallskip\noindent\textbf{Specification.} The
input to the algorithm is a \problem{Network Reliability} instance $I = (G, \source, \targets,  \Prob)$ together with a $k$-decomposition $T = (\bags, E_T)$ of the graph~$G$. The output is the
reliability $\rel(I)$, i.e.~the probability of existence of a path from $\source$ to $\targets$. Given that
the tree decomposition can be rooted at any bag, without loss of generality,
we assume that the source vertex $\source$ is in the root bag. We also assume that $G$ has $n$ vertices and $\vert \bags\vert \in O(n \cdot k)$. This can be obtained by an algorithm described in~\cite{bodlaender1996linear}.

\smallskip\noindent\textbf{Methodology.} Our algorithm is based on a technique called ``kernelization''~\cite{cygan2015parameterized}: Using the tree-decomposition $T$, we repeatedly shrink the graph $G$ to obtain smaller graphs that all have the same reliability as $G$. We continue our shrinking until we reach a graph that has very few vertices, i.e.~at most $O(k)$ vertices. We then use brute force to compute the reliability of this graph.

\smallskip\noindent\textbf{Discrete Probability Distributions.} Given a finite set $X$, a \emph{probability distribution} over $X$ is a function $\Prob: X \rightarrow [0, 1]$, that assigns a probability to each member of $X$, such that $\sum_{x\in X} \Prob(x) = 1.$

We first define an extension of the \problem{Network Reliability} problem, in which the probabilities of appearance of the edges need not be independent anymore, i.e.~some edges are \emph{correlated}. Although this extension makes the problem more general, it helps in finding a solution. As we will later see, it allows us to apply a shrinking procedure as described above.

\smallskip\noindent\textbf{Extended Network Reliability.} An \problem{Extended Network Reliability} instance with $r$ parts is a tuple $I = (G, E_1,  \ldots, E_r, \source, \targets, \Prob_1, \ldots, \Prob_r)$ in which:

\begin{itemize}
	\item $G = (V, E)$ is a connected graph;
	\item The $E_i$'s are pairwise disjoint multisets of edges and $\bigcup_{i=1}^r E_i = E;$
	\item $\source \in V$ is the source vertex;
	\item $\targets \subseteq V$ is the set of target vertices; and
	\item Each $\Prob_i: 2^{E_i} \rightarrow [0, 1]$ is a probability distribution over the subsets of $E_i$. 
	
\end{itemize}

We now define the \problem{Extended Network Reliability} problem on the instance $I$ as follows: a new graph $G^s$ is probabilistically constructed such that its vertex set is $V$ and its edge set is a subset  $E^s \subseteq E$ chosen probabilistically as follows:
\begin{itemize}
	\item For every part $E_i$, a subset $E^s_i \subseteq E_i$ of edges is probabilistically chosen according to the distribution $\Prob_i$. The $E^s_i$'s are chosen independently of each other.
	\item The set $E^s$ is defined as $E^s = \bigcup_{i=1}^r E^s_i.$
\end{itemize}
The \problem{Extended Network Reliability} problem asks for the probability $\rel(I)$ that the probabilistically-constructed graph $G^s$ contains a path from $\source$ to $\targets$. Intuitively, appearance of every edge in each part $E_i$ is correlated to every other edge in $E_i$, but independent of all the edges outside of $E_i$. A \problem{Network Reliability} instance is simply an \problem{Extended Network Reliability} instance in which each $E_i$ consists of a single edge, i.e.~every edge is independent of every other edge.

We now provide a simple brute force algorithm for the \problem{Extended Network Reliability} problem. This algorithm will later serve as a subprocedure in our main algorithm.

\smallskip\noindent\textbf{The Brute Force Algorithm.} Consider an \problem{Extended Network Reliability} instance $I$ as above and a graph $G' = (V, E')$ where $E' \subseteq E$, i.e.~a graph with the same set of vertices as $G$, but only a subset of its edges. We can easily compute $\prob(G^s = G'),$ i.e.~the probability that the probabilistically-constructed graph $G^s$ is equal to $G'$. We use each $\Prob_i$ to find the probability of the specific combination of correlated edges that are present in $E' \cap E_i$. Therefore, we have:
$$
\prob(G^s = G') = \prod_{i=1}^{r} \Prob_i(E' \cap E_i).
$$
Now $\rel(I)$ is simply the sum of $\prob(G^s = G')$ over those graphs $G'$ in which there is a path from $\source$ to $\targets$. Hence, we can use the brute force method as in Algorithm~\ref{alg:bf} for answering the \problem{Extended Network Reliability} problem. The algorithm creates all possible subgraphs $G'$ and checks if there is a path from $\source$ to $\targets$ in $G'$. If so, it computes the probability $\prob(G^s = G')$. Finally, it returns the sum of computed probabilities. 

\begin{algorithm}[H]
	\SetKwInOut{Input}{Input}\SetKwInOut{Output}{Output}
	\Input{An \problem{Extended Network Reliability} instance $I = (G, E_1, \ldots, E_r, \source, \targets, \Prob_1, \ldots, \Prob_r)$}
	\Output{$\rel(I)$}
	$\texttt{ans} \leftarrow 0$\;
	\ForEach{$E' \subseteq E$}
	{
		$G' \leftarrow (V, E')$\;
		\If{$\source \leadsto_{G'} \targets$}
		{
			$p \leftarrow 1$\;
			\For{$i \in \{1, \ldots, r\}$}
			{
				$p \leftarrow p \cdot \Prob_i(E' \cap E_i)$
			}
			$\texttt{ans} \leftarrow \texttt{ans} + p$\;
		}	
	}
	\textbf{return} $\texttt{ans}$\;
	
	\caption{The brute force method}\label{alg:bf}
\end{algorithm}

\smallskip\noindent\textbf{Complexity of the Brute Force Algorithm.} Assuming that the graph $G$ has $n$ vertices and $m$ edges, Algorithm~\ref{alg:bf} considers at most $2^{m}$ different cases for $E'$ (Line~2). In each case, checking reachability (Line~4) can be done in $O(m)$ using standard algorithms such as DFS or BFS, and computing $\prob(G^s = G')$, i.e.~the variable $p$ in the algorithm (Lines~5--7), also takes $O(m)$ time.  Hence, Algorithm~\ref{alg:bf} has a total runtime of $O(m \cdot 2^m)$ which is exponential. Therefore, this algorithm is only applicable to very small graphs.

In order to use Algorithm~\ref{alg:bf} on larger graphs, we need to shrink them to smaller graphs with the same reliability. The following lemmas are our main tools in doing so.

\begin{lemma} \label{lemma:avvali}
	Let $G_B = (B, E_B)$ be a graph, $\targets$ a set of target vertices, $B^* \subseteq B$ a subset of vertices that contains a target vertex $\target^* \in \targets$. Also, let $E^*$ be the set of all possible edges over $B^*,$ i.e.~$E^* = \{ \{u, v\} ~ \vert ~ u, v \in B^* \}.$ Then, there exists a function $f : 2^{E_B} \rightarrow 2^{E^*}$ that maps every subset $E'$ of edges of $E_B$ to a subset $f(E')$ of edges of $E^*$, such that:
	\begin{itemize}
		\item For all $a \in B^*$, we have $a \leadsto_{f(E')} \targets$ if and only if $a \leadsto_{E'} \targets.$
		\item For all $a, b \in B^*$ such that $a \not\leadsto_{E'} \targets$ and $b \not\leadsto_{E'} \targets$, we have $a \leadsto_{f(E')} b$ if and only if $a \leadsto_{E'} b.$
		\item For all $a, b \in B^*$ such that $a \leadsto_{E'} \targets$ and $b \leadsto_{E'} \targets$, we have $a \leadsto_{f(E')} b.$
	\end{itemize}  
	Moreover, given $E'$, one can compute $f(E')$ in linear time, i.e.~$O(\vert B \vert + \vert E_B \vert).$
\end{lemma}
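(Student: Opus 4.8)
The plan is to build $f(E')$ as a boundary-level ``summary'' of the connectivity of the graph $(B, E')$, driven entirely by its connected components. First I would compute the connected components of $(B, E')$ and, for each, record whether it contains any vertex of $\targets$. This splits the boundary set $B^*$ into two kinds of vertices: those lying in a component that reaches a target (call them \emph{live}) and those lying in a target-free component (call them \emph{dead}). Note that $\target^* \in B^*$ is itself a target, so it reaches itself and is always live; conversely, no target of $B^*$ can be dead, a fact I will use repeatedly below.

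The construction of $f(E')$ then uses two gadgets, both expressible with edges of $E^*$ since $E^*$ is the complete edge set on $B^*$. For the live boundary vertices I would add a star centered at $\target^*$, i.e.\ the edge $\{a, \target^*\}$ for every live $a \neq \target^*$; this connects all live vertices to one another and to the in-boundary target $\target^*$. For the dead vertices I would, for each target-free component $C$, pick a representative in $C \cap B^*$ and attach the remaining boundary vertices of $C$ to it by a star, never joining distinct components and never joining a dead vertex to a live one. The first property is then immediate in the live case, and follows in the dead case because a dead vertex is only connected, inside $f(E')$, to dead vertices of its own component, none of which is a target; the second property holds because the dead gadgets reproduce exactly the partition of dead vertices into components of $(B, E')$ (here one checks that if $a$ is dead and $a \leadsto_{E'} b$ then $b$ is dead as well, so no dead/live mixing occurs); and the third property holds because any two live vertices are joined through $\target^*$.

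The point I expect to be the real crux is the asymmetry among the three required properties: the first two are biconditionals that force $f(E')$ to preserve reachability \emph{exactly}, whereas the third is only a one-directional guarantee. This asymmetry is precisely what makes the reduction possible, since live boundary vertices are permitted to become mutually reachable in $f(E')$ even when they sat in different components of $(B, E')$, so we may safely collapse all of them onto the single in-boundary target $\target^*$ and discard their fine-grained connectivity. The hypothesis $\target^* \in B^*$ is exactly what lets this collapse be written using only edges of $E^*$: without an in-boundary target there would be no vertex in $B^*$ to route the live vertices to while respecting the first property.

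Finally, for the complexity claim I would note that the components of $(B, E')$ together with the target-containment flags are obtained by a single DFS or BFS in $O(\vert B \vert + \vert E_B \vert)$ time (using $E' \subseteq E_B$), after which both gadgets add only $O(\vert B^* \vert) = O(\vert B \vert)$ edges and are produced by one further linear scan, yielding the stated $O(\vert B \vert + \vert E_B \vert)$ bound.
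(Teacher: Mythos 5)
Your construction is exactly the one in the paper: a star at $\target^*$ over the boundary vertices whose components reach a target, plus a star at a chosen representative inside each target-free component's boundary intersection, computed via one DFS/BFS pass. The proof is correct and follows essentially the same approach as the paper (in fact you verify the three properties more explicitly than the paper, which leaves that step to the reader).
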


Intuitively, the lemma above says that from a graph with vertex set $B$, one can create a smaller ``digest'' graph with vertex set $B^* \subseteq B$, in which (i)~a vertex has a path to a target if and only if it used to have a path to a target in the first place, (ii) any two vertices that do not have a path to a target are in the same connected component if and only if they used to be in the same connected component in the first place, and (iii) all the vertices that can reach a target are put in the same connected component. Indeed, the construction below merges all the vertices that could originally reach a target into a single connected component, and keeps the other connected components intact. 

\begin{proof}
	We assume an arbitrary total order on the vertices so that given a set of vertices, we can talk of the vertex with the smallest index.
	We construct $f(E')$ as follows:
	\begin{itemize}
		\item For every vertex $a \in B^* \setminus \{ \target^* \}$ such that $a \leadsto_{E'} \targets,$ we add the edge $\{a, \target^* \}$ to $f(E').$
		\item We consider the connected components $C_1, C_2, \ldots, C_s$ of the graph $(B, E').$ For every $C_i$, if $C_i \cap \targets = \emptyset$ and $C_i \cap B^* \neq \emptyset,$ we let $c_i$ be the vertex in $C_i \cap B^*$ with the smallest index. For every vertex $c'_i \in C_i \cap B^* \setminus \{c_i\}$, we add the edge $\{c_i, c'_i\}$ to $f(E')$. 
	\end{itemize}
	Figure~\ref{fig:f} shows an example application of $f$.
	
	\begin{figure}[H]
		\centering
		\includegraphics[scale=0.6]{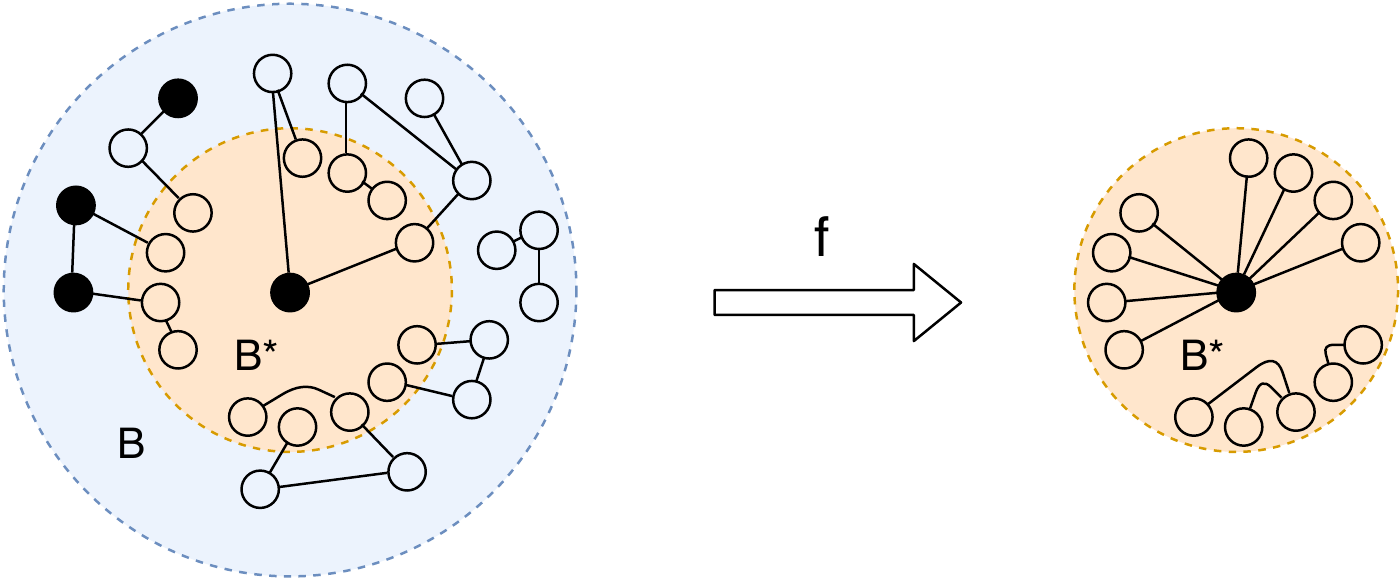}
		\caption{An example application of $f$. Target vertices are shown in black. The graph $(B, E')$ is shown on the left and $(B^*, f(E'))$ is the graph on the right.}
		\label{fig:f}
	\end{figure}
	
	By the above construction, it is easy to verify that $f(E')$ has the desired properties. Moreover, $f(E')$ can be computed by a single use of a classical reachability algorithm, such as DFS, that finds the connected components $C_i$. Hence, it can be computed in $O(\vert B \vert + \vert E_B \vert).$
\end{proof}

\begin{lemma}[Shrinking Lemma] \label{lemma:shrink}
	Let $I = (G, E_1, \ldots, E_r, \source, \targets, \Prob_1, \ldots, \Prob_r)$ be an \problem{Extended Network Reliability} instance such that:
	\begin{itemize}
		\item $G = (V, E)$ and $(A, B)$ is a separation of $G$;
		\item $\source \in A$;
		\item There exists a target vertex $\target^* \in A \cap B \cap \targets;$
		\item Every part $E_i$ is either entirely in $A$ or entirely in $B$. Without loss of generality, we assume that $E_1, \ldots, E_t$ are entirely in $A$ and $E_{t+1}, \ldots, E_r$ are entirely in $B$. We define $E_A := E_1 \cup \cdots \cup E_t$ and $E_B := E_{t+1} \cup \cdots \cup E_r$. If an $E_i$ is entirely in $A \cap B$, we consider it to be part of $E_B$;
	\end{itemize}
	then, there exists a smaller instance $\overline{I} = (\overline{G}, E_1, \ldots, E_t, \overline{E_{t+1}}, \Prob_1, \ldots, \Prob_t, \overline{\Prob_{t+1}})$ with vertex set $A$, i.e. $\overline{G} = (A, \overline{E}),$ such that $\rel(\overline{I})=\rel(I).$
\end{lemma}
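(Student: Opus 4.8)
The plan is to build $\overline{I}$ by collapsing the $B$-side of the separation into a single correlated part living on the separator $S := A \cap B$, and then to show that this collapse preserves reliability realization-by-realization. First I would set $\overline{G} = (A, E_A \cup E^*)$, where $E^* = \{\{u,v\} \mid u,v \in S\}$ is the set of all possible edges over $S$ (this is exactly the $E^*$ of Lemma~\ref{lemma:avvali} taken with $B^* = S$). The parts $E_1, \dots, E_t$ together with their distributions $\Prob_1, \dots, \Prob_t$ are kept verbatim, and I introduce one new part $\overline{E_{t+1}} := E^*$. To define its distribution, let $f$ be the map from Lemma~\ref{lemma:avvali} applied to the graph $G_B = (B, E_B)$ with the distinguished target $\target^* \in S \cap \targets$, and let $\mathcal{D}_B(E') := \prod_{i=t+1}^{r} \Prob_i(E' \cap E_i)$ be the product distribution that $I$ induces on subsets $E' \subseteq E_B$. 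I then set
$$ \overline{\Prob_{t+1}}(F) := \sum_{E' \subseteq E_B,\; f(E') = F} \mathcal{D}_B(E') \qquad (F \subseteq E^*), $$
i.e.\ $\overline{\Prob_{t+1}}$ is the pushforward of $\mathcal{D}_B$ under $f$. Since the $E_i$ partition $E_B$ and each $\Prob_i$ sums to $1$, $\mathcal{D}_B$ is a distribution and hence so is its pushforward, so $\overline{I}$ is a legitimate \problem{Extended Network Reliability} instance with vertex set $A$, source $\source \in A$, and targets $\targets \cap A \ni \target^*$.

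Next I would reduce the probabilistic identity to a deterministic one. Because every part is entirely inside $A$ or inside $B$, the realization $E^s = E^s_A \cup E^s_B$ splits into an $A$-choice $E^s_A \subseteq E_A$ (distributed as $\Prob_1 \times \cdots \times \Prob_t$) and an independent $B$-choice $E^s_B \subseteq E_B$ (distributed as $\mathcal{D}_B$). I couple the two instances by drawing $E^s_A$ identically and setting the realization of the new part to $F := f(E^s_B)$; by the definition of $\overline{\Prob_{t+1}}$ this coupling has exactly the correct marginals. Summing over realizations and using this coupling, the equality $\rel(\overline I) = \rel(I)$ follows at once from the pointwise claim: for every $E^s_A \subseteq E_A$ and every $E' \subseteq E_B$ with $F = f(E')$,
$$ \source \leadsto_{E^s_A \cup E'} \targets \quad\Longleftrightarrow\quad \source \leadsto_{E^s_A \cup F} \targets, $$
where on the right the targets are those lying in $A$. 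Establishing this equivalence is the heart of the argument.

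For the direction $(\Leftarrow)$ I would take a path $Q$ from $\source$ to a target in $(A, E^s_A \cup F)$ and walk along it, expanding each $F$-edge. A Type-2 edge of $f(E')$ joins two separator vertices in a common, target-free connected component of $(B, E')$, so it can be replaced by a genuine $E'$-path. A Type-1 edge has the form $\{a, \target^*\}$ with $a \leadsto_{E'} \targets$; the first time $Q$ uses such an edge, the already-expanded prefix has reached $a$ through real edges, and $a \leadsto_{E'} \targets$ then certifies a real $\source$-to-target path in $G^s$. Conversely, for $(\Rightarrow)$ I would project a path $P$ from $\source$ to a target onto its subsequence $\source = a_0, a_1, \dots, a_p$ of vertices in $A$ and induct on $p$, maintaining the invariant that either some $a_j$ already reaches a target in $(A, E^s_A \cup F)$ or $\source \leadsto_{E^s_A \cup F} a_j$. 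Each step $a_j \to a_{j+1}$ is either a single realized $A$-edge (kept directly) or a passage that stays in $B$ --- a separator edge in $E'$ or an excursion through $B \setminus S$ --- in which case $a_j$ and $a_{j+1}$ lie in one component of $(B, E')$; properties (i)--(iii) of Lemma~\ref{lemma:avvali} then give either a Type-2 $F$-edge (target-free component) or, if that component meets $\targets$, a Type-1 edge witnessing $a_j \leadsto_F \target^*$, so a target is already reached. A final target $\target \in B \setminus A$ is handled by applying the invariant to the last $A$-vertex preceding the terminal excursion, which reaches $\targets$ through $E'$ and hence reaches $\target^*$ through a Type-1 edge.

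I expect the forward direction to be the main obstacle, for two reasons. First, the bookkeeping of where the separator actually sits: an edge used by $P$ inside $A$ may be a realized $A$-edge or, when both endpoints fall in $S$, an $E'$-edge, and these must be routed through $E^s_A$ and $F$ respectively, so the case analysis has to be exhaustive. Second, the induction must correctly separate ``reached a target'' from ``reached an ordinary separator vertex'': it is precisely property~(iii) of Lemma~\ref{lemma:avvali} --- that all separator vertices able to reach a target are merged at $\target^*$ --- that lets any excursion through a target-containing $B$-component immediately certify a source-to-target path rather than forcing us to track which specific target is reached. Once the pointwise equivalence is in place, the passage back to probabilities is the routine summation described above.
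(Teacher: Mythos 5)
Your proposal is correct and follows essentially the same route as the paper's proof: the same pushforward distribution $\overline{\Prob_{t+1}}(F) = \sum_{E' \in f^{-1}(F)} \prod_{i=t+1}^{r}\Prob_i(E'\cap E_i)$, the same pointwise claim that $\source \leadsto_{E'_A \cup E'_B} \targets$ iff $\source \leadsto_{E'_A \cup f(E'_B)} \targets$, and the same path-replacement case analysis via Lemma~\ref{lemma:avvali}. Your coupling phrasing and the $A$-subsequence induction are only cosmetic variants of the paper's summation rearrangement and step-by-step path construction.
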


Intuitively, given a few conditions, this lemma provides a way of shrinking an instance by means of decreasing the number of vertices in the instance, i.e.~removing all the vertices in $B \setminus A$, without changing the reliability. 

\begin{proof}
	We use the function $f$ as in Lemma~\ref{lemma:avvali}, by considering $B^* = A \cap B$ and $G_B = (B, E_{t+1} \cup \cdots \cup E_r) = (B, E_B)$. We let the new part $\overline{E_{t+1}} = E^*$ consist of all the possible edges over $B^*.$ We define the probability distribution $\overline{\Prob_{t+1}}: 2^{\overline{E_{t+1}}} \rightarrow [0, 1]$ as follows. For every $F \subseteq \overline{E_{t+1}},$ we let 
	\begin{equation} \label{eq:def}
		\overline{\Prob_{t+1}}(F) := \sum_{E' \in f^{-1}(F)} \prod_{i=t+1}^r \Prob_i(E' \cap E_i).
	\end{equation}
	
	\noindent\emph{Claim.} \label{claim} Let $E'_A \subseteq E_A$ and $E'_B \subseteq E_B$, we claim that $\source \leadsto_{E'_A \cup E'_B} \targets$ if and only if $\source \leadsto_{E'_A \cup f(E'_B)} \targets.$
	
	\noindent\emph{Proof of Claim.} Note that $B^* = A \cap B$ is a separator in $G$, so there is no edge between $A \setminus B$ and $B \setminus A$. Consider the graphs $G_1 = (A \cup B, E'_A \cup E'_B)$ and $G_2 = (A, E'_A \cup f(E'_B))$, we want to prove that $\source \leadsto_{G_1} \targets$ if and only if $\source \leadsto_{G_2} \targets.$ 
	
	First, we assume $s \leadsto_{G_1} \targets.$ Consider a path $\pi := u_0, u_1, \ldots, u_l$ in $G_1$ from $u_0 = \source$ to some target vertex $u_l \in \targets.$ We construct a path $\pi'$ from $\source$ to $\targets$ in $G_2$ by following $\pi$ step-by-step. At each step, we assume that we have a prefix of $\pi'$ from $\source$ to $u_i$. We extend this prefix as follows:
	\begin{enumerate}
		\item If the edge $\{u_i, u_{i+1}\}$ is in $E'_A$, then it has appeared in both $G_1$ and $G_2$. So we simply extend $\pi'$ by adding $u_{i+1}.$ In particular, this case always happens if at least one of $u_i$ and $u_{i+1}$ are in $A \setminus B$.
		\item Otherwise, if both $u_i$ and $u_{i+1}$ are in $A \cap B$, then by Lemma~\ref{lemma:avvali}, we have $u_i \leadsto_{f(E'_B)} u_{i+1}.$ So, we extend $\pi'$ by a path from $u_i$ to $u_{i+1}$ in $f(E'_B)$.
		\item Otherwise, if $u_i \in A \cap B$ and $u_{i+1} \in B \setminus A$, then we consider two cases:
		\begin{enumerate}
			\item if the path $\pi$ ends in a target vertex in $B \setminus A$ without coming back to $A \cap B$, then by Lemma~\ref{lemma:avvali}, we have $u_i \leadsto_{f(E'_B)} \target^*$. We extend $\pi'$ by adding the path from $u_i$ to $\target^*$.
			\item otherwise, $\pi$ re-enters $A \cap B$. Assume that this re-entry happens at $u_j$ for some $j>i+1$. Then, by Lemma~\ref{lemma:avvali}, we have $u_i \leadsto_{f(E'_B)} u_j$. So, we extend $\pi'$ with the path from $u_i$ to $u_j$ in $f(E'_B)$ and continue from $u_j$.
		\end{enumerate}
		
	\end{enumerate}
	Note that no other case is possible, given that $A \cap B$ is a separator. Also, following the steps above, we either end up at $\target^*$ or $u_l$, both of whom are target vertices. Therefore $\source \leadsto_{G_2} \targets.$ The other side can be proven similarly, i.e.~by taking a path in $G_2$ and replacing every contiguous sequence of edges in $f(E'_B)$ by edges in $E'_B$.

	\medskip
	\medskip
	We now continue our proof of the shrinking lemma. We prove that $\rel(\overline{I}) = \rel(I).$ We have 
	$$
	\rel(I) = \sum_{E' \subseteq E} \prob(G^s = (V, E')) \cdot \delta({\source \leadsto_{E'} \targets} )
	$$
	where $\delta$ is the indicator function that has a value of 1 if its parameter is true and 0 otherwise, i.e.~$\delta(p) = \left\{\begin{matrix}
	1 & p\\ 
	0 & \neg p
	\end{matrix}\right..$ Therefore, if we consider $E'_A = E' \cap E_A$ and $E'_B = E' \cap E_B$, we have:
	$$
	\rel(I)  = \sum_{E'_A \subseteq E_A} \sum_{E'_B \subseteq E_B} \prob(G^s = (V, E'_A \cup E'_B)) \cdot \delta({\source \leadsto_{E'_A \cup E'_B} \targets} ) 
	$$
	
	We now divide the second summand based on the value of $f(E'_B),$ to get:
	
	$$
	\rel(I) =  \sum_{E'_A \subseteq E_A} \sum_{F \subseteq \overline{E_{t+1}}} \sum_{E'_B \in f^{-1}(F)} \prob(G^s = (V, E'_A \cup E'_B)) \cdot \delta({\source \leadsto_{E'_A \cup E'_B} \targets} )\\
	$$
	
	The appearance of edges in $E_A$ and $E_B$ are independent of each other, so we have $\prob(G^s = (V, E'_A \cup E'_B)) = \prob(G^s \cap E_A = E'_A) \cdot \prob(G^s \cap E_B = E'_B),$ therefore:
	$$
	\rel(I) =  \sum_{E'_A \subseteq E_A} \sum_{F \subseteq \overline{E_{t+1}}} \sum_{E'_B \in f^{-1}(F)} \prob(G^s \cap E_A = E'_A) \cdot \prob(G^s \cap E_B = E'_B) \cdot \delta({\source \leadsto_{E'_A \cup E'_B} \targets} ) 
	$$
	
	Given that $\prob(G^s\cap E_A = E'_A)$ is independent of the two inner summations, we have:
	$$
	\rel(I) =  \sum_{E'_A \subseteq E_A} \prob(G^s \cap E_A = E'_A) \cdot \sum_{F \subseteq \overline{E_{t+1}}} \sum_{E'_B \in f^{-1}(F)}  \prob(G^s \cap E_B = E'_B) \cdot \delta({\source \leadsto_{E'_A \cup E'_B} \targets} ) 
	$$
	
	Moreover, appearance of edges in each $E_i$ is independent of every other $E_j$, so $\prob(G^s \cap E_A = E'_A) = \prod_{i=1}^t \Prob_i(E'_A \cap E_i)$ and similarly, $\prob(G^s \cap E_B = E'_B) = \prod_{i=t+1}^r \Prob_i(E'_B \cap E_i).$ Therefore, we have:
	$$
	\rel(I) = \sum_{E'_A \subseteq E_A} \prod_{i=1}^{t} \Prob_i(E'_A \cap E_i) \cdot \sum_{F \subseteq \overline{E_{t+1}}} \sum_{E'_B \in f^{-1}(F)}  \prod_{i=t+1}^r \Prob_i(E'_B \cap E_i) \cdot \delta({\source \leadsto_{E'_A \cup E'_B} \targets} )
	$$
	
	According to the Claim proven in Page~\pageref{claim}, we have $\delta(\source \leadsto_{E'_A \cup E'_B} \targets) = \delta(\source \leadsto_{E'_A \cup f(E'_B) } \targets)$. Note that $f(E'_B)$ is simply $F$, given that $E'_B \in f^{-1}(F)$. So, we have: 
	$$
	\rel(I) =  \sum_{E'_A \subseteq E_A} \prod_{i=1}^{t} \Prob_i(E'_A \cap E_i) \cdot \sum_{F \subseteq \overline{E_{t+1}}} \sum_{E'_B \in f^{-1}(F)}  \prod_{i=t+1}^r \Prob_i(E'_B \cap E_i) \cdot \delta({\source \leadsto_{E'_A \cup F} \targets} )
	$$
	
	In Equation~\eqref{eq:def}, we defined $\overline{\Prob_{t+1}}(F)$ as  $\sum_{E' \in f^{-1}(F)} \prod_{i=t+1}^r \Prob_i(E' \cap E_i)$, therefore:
	$$
	\rel(I) = \sum_{E'_A \subseteq E_A} \prod_{i=1}^{t} \Prob_i(E'_A \cap E_i) \cdot \sum_{F \subseteq \overline{E_{t+1}}} \overline{\Prob_{t+1}} (F) \cdot \delta({\source \leadsto_{E'_A \cup F} \targets} )
	$$
	
	We now reverse all the steps above, using $\overline{G^s}$, i.e.~the probabilistic graph obtained according to the \problem{Extended Network Reliability} instance $\overline{I}$:
	
	$$
	\begin{matrix*}[l]
	\rel(I) & = & \sum_{E'_A \subseteq E_A} 
	\prob(\overline{G^s} \cap E_A = E'_A) \cdot \sum_{F \subseteq \overline{E_{t+1}}} \prob(\overline{G^s} \cap \overline{E_{t+1}} = F)  \cdot \delta(\source \leadsto_{E'_A \cup F} \targets) \\
	
	& = & \sum_{E'_A \subseteq E_A} \sum_{F \subseteq \overline{E_{t+1}}} 
	\prob(\overline{G^s} \cap E_A = E'_A) \cdot \prob(\overline{G^s} \cap \overline{E_{t+1}} = F)  \cdot \delta(\source \leadsto_{E'_A \cup F} \targets) \\
	
	& = & \sum_{E'_A \subseteq E_A} \sum_{F \subseteq \overline{E_{t+1}}} 
	\prob(\overline{G^s} = (V, E'_A \cup F)) \cdot \delta(\source \leadsto_{E'_A \cup F} \targets) \\
	
	& = & \sum_{\overline{E'} \subseteq E_A \cup \overline{E_{t+1}}} 
	\prob(\overline{G^s} = (V, \overline{E'}))  \cdot \delta(\source \leadsto_{\overline{E'}} \targets) \\
	
	& = & \rel(\overline{I}).
	
	\end{matrix*}
	$$
\end{proof}

\smallskip\noindent\textbf{The Complexity of Shrinking Lemma.} To apply the Shrinking Lemma, we compute $f(E'_B)$ for every $E'_B \subseteq E_B$. There are $2^{\vert E_B \vert}$ such subsets. Moreover, by Lemma~\ref{lemma:avvali}, each computation of $f$ takes $O(\vert B \vert + \vert E_B \vert)$, therefore the overall complexity of the Shrinking Lemma is $O(2^{\vert E_B \vert} \cdot (\vert B \vert + \vert E_B \vert)),$ which is exponential in the size of $E_B$. Hence, the Shrinking Lemma should only be applied when the set $B$ is small. In our algorithm below, whenever we use the Shrinking Lemma, the set $B$ is a bag of the tree decomposition and therefore has size at most $O(k)$.

The following lemma provides the last ingredient for our main algorithm:
\begin{lemma} \label{lemma:merge}
	Let $I = (G, E_1, \ldots, E_r, \source, \targets, \Prob_1, \ldots, \Prob_r)$ be an \problem{Extended Network Reliability} instance with $r \geq 2$. There exists an instance $I_{r-1,r} = (G, E_1, \ldots, E_{r-2}, E_{r+1}, \source, \targets, \Prob_1, \ldots, \Prob_{r-2}, \Prob_{r+1})$ such that $\rel(I) = \rel(I_{r-1, r}).$ We refer to $I_{r-1, r}$ as the instance obtained by \emph{merging} $E_{r-1}$ and $E_r$ in $I$. 
\end{lemma}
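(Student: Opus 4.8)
The plan is to merge the two parts into one part carrying the product of their two distributions, and then observe that this operation does not change the law of the probabilistically-constructed graph $G^s$, hence does not change the reliability. Concretely, I would set the new part to be $E_{r+1} := E_{r-1} \cup E_r$. Since the parts of an \problem{Extended Network Reliability} instance are pairwise disjoint, this is a disjoint union and $E_{r+1}$ is disjoint from each of $E_1, \ldots, E_{r-2}$; together with these it still partitions $E$, so $I_{r-1,r}$ is a well-formed instance with $r-1$ parts. For the distribution I would define, for every $F \subseteq E_{r+1}$,
$$\overline{\Prob}_{r+1}(F) := \Prob_{r-1}(F \cap E_{r-1}) \cdot \Prob_r(F \cap E_r).$$

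First I would check that $\overline{\Prob}_{r+1}$ is a genuine probability distribution on $2^{E_{r+1}}$. Every $F \subseteq E_{r+1}$ decomposes uniquely as $F = (F \cap E_{r-1}) \cup (F \cap E_r)$ with the two pieces ranging independently over $2^{E_{r-1}}$ and $2^{E_r}$, so summing the definition factorizes as $\left( \sum_{F' \subseteq E_{r-1}} \Prob_{r-1}(F') \right) \cdot \left( \sum_{F'' \subseteq E_r} \Prob_r(F'') \right) = 1$, using that $\Prob_{r-1}$ and $\Prob_r$ are themselves distributions.

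The core step is to show that $\prob(G^s = G')$ is the same under $I$ and under $I_{r-1,r}$ for every spanning subgraph $G' = (V, E')$; since both instances share the same vertex set and the same underlying edge multiset $E$, this is all that is needed. Using the product formula stated in the brute-force section, under $I$ this probability equals $\prod_{i=1}^{r} \Prob_i(E' \cap E_i)$, while under $I_{r-1,r}$ it equals $\left( \prod_{i=1}^{r-2} \Prob_i(E' \cap E_i) \right) \cdot \overline{\Prob}_{r+1}(E' \cap E_{r+1})$. These agree because, with $E_{r-1}, E_r \subseteq E_{r+1}$ disjoint, we have $(E' \cap E_{r+1}) \cap E_{r-1} = E' \cap E_{r-1}$ and $(E' \cap E_{r+1}) \cap E_r = E' \cap E_r$, so the definition of $\overline{\Prob}_{r+1}$ gives exactly $\overline{\Prob}_{r+1}(E' \cap E_{r+1}) = \Prob_{r-1}(E' \cap E_{r-1}) \cdot \Prob_r(E' \cap E_r)$, restoring the two factors that were split off.

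Finally, since $\rel$ is by definition the sum of $\prob(G^s = G')$ over those $G'$ that contain a source-to-target path, and both the index set of subgraphs $G'$ and the event $\source \leadsto_{E'} \targets$ depend only on $E'$ and not on how edges are grouped into correlated blocks, the two sums agree term by term and $\rel(I) = \rel(I_{r-1,r})$. I do not anticipate a real obstacle here: the whole content of the lemma is that regrouping two independent blocks into a single block governed by their product law leaves the induced measure on subgraphs untouched. The only points requiring care are the bookkeeping that the new parts still partition $E$ and the elementary factorization of the product distribution; everything else follows directly from the definition of $\rel$.
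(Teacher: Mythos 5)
Your proof is correct, and it is actually more detailed than the paper's own argument, which defines the construction and then dismisses the verification as ``straightforward.'' However, your construction is genuinely different from the paper's. You take $E_{r+1}$ to be the disjoint multiset union $E_{r-1}\cup E_r$ and equip it with the product distribution $\Prob_{r-1}(F\cap E_{r-1})\cdot\Prob_r(F\cap E_r)$; this leaves the induced law on subgraphs literally unchanged, so $\rel(I)=\rel(I_{r-1,r})$ follows term by term, and it has the added virtue of keeping the underlying graph $G$ exactly as in the lemma statement. The paper instead lets $W$ be the set of endpoints of edges in $E_{r-1}\cup E_r$, takes $E_{r+1}$ to be \emph{all} simple edges over $W$, and pushes the product measure forward along an operation $\oplus$ that collapses parallel edges; this changes the edge multiset (so, strictly, the graph of the new instance is not the original $G$) and requires the extra observation that connectivity depends only on which vertex pairs are joined by at least one surviving edge. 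The reason the paper pays this price is the complexity analysis of the main algorithm: merges are iterated many times within a single bag, and the paper's collapse guarantees that every merged part is supported on at most $\binom{k+2}{2}$ simple edges, so its distribution has a domain of size at most $2^{\binom{k+2}{2}}$. With your multiset union, repeated merges could accumulate all parallel edges and all the $E^*$ parts contributed by a bag's many children, and the domain of the merged distribution would no longer be bounded by a function of $k$ alone. So your argument proves this lemma as stated, cleanly, but if you intend to plug it into the overall algorithm you would need to add the paper's deduplication step (or an equivalent one) to preserve the claimed running time.
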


\begin{proof}
	Let $W \subseteq V$ be the subset of vertices consisting of all the endpoints of edges in $E_{r-1}$ and $E_r$. We define $E_{r+1}$ as a set containing all possible edges over $W$, i.e.~$E_{r+1} = \{ \{u, v\} ~\vert~ u, v \in W \}.$ Let $F_{r-1} \subseteq E_{r-1}$ and $F_{r} \subseteq E_r$, we define $F_{r-1} \oplus F_r$ as the subset of $E_{r+1}$ that contains an edge from $u$ to $v$ if at least one of $F_{r}$ and $F_{r+1}$ do. Intuitively, $F_{r-1} \oplus F_r$ is a special kind of union that ignores repeated edges with the same endpoints. For every $F_{r+1} \subseteq E_{r+1},$ we define
	$$
	\Prob_{r+1}(F_{r+1}) := \sum_{F_{r-1} \oplus F_r = F_{r+1}} \Prob_{r-1}(F_{r-1}) \cdot \Prob_r(F_r).
	$$
	Informally, we took two parts $E_{r-1}$ and $E_{r}$ which used to be independent and merged them into a single correlated part. It is straightforward to verify that this construction preserves the reliability.
\end{proof}

Note that the order of $E_i$'s in $I$ does not matter. Hence, we can define $I_{i,j}$ as the instance obtained by merging $E_i$ and $E_j$ in $I$ and construct it in a similar manner.

We are now ready to present our main algorithm for computing \problem{Network Reliability} when the underlying graph has a small treewidth $k$. Basically, our algorithm applies the Shrinking Lemma repeatedly until the number of vertices in the graph is reduced to $O(k).$ Then, it runs the brute force algorithm over it.

\smallskip\noindent\textbf{Input.} As mentioned earlier, the input to the algorithm is a \problem{Network Reliability} instance $I = (G, \source, \targets,  \Prob)$ together with a $k$-decomposition $T = (\bags, E_T)$ of the graph~$G$ rooted at a bag $r \in \bags$ that contains the source vertex, i.e.~$\source \in V(r)$. 

\smallskip\noindent\textbf{Our Algorithm.} We compute $\rel(I)$ as follows:
\begin{enumerate}
	\item We take an arbitrary target vertex $\target^* \in \targets$ and add it to the vertex set of every bag in $\bags.$ 
	\item As long as $\bags$ contains more than one bag, we do the following:
	\begin{enumerate}
		\item We take an arbitrary leaf bag $b \in \bags$. Let $p$ be the parent bag of $b$. 
		\item We apply the Shrinking Lemma with $(A, B) = (\bigcup_{a \in \bags \setminus \{b\}} V(a) , V(b))$. This effectively removes all the vertices in $B \setminus A$ from $G$.
		\item We remove $b$ from $T$.
		\item We take all the edge parts $E_{i_1}, E_{i_2}, \ldots, E_{i_j}$ that are entirely in $V(p)$ and merge them together using Lemma~\ref{lemma:merge}.
	\end{enumerate}
	\item If $\bags$ contains a single bag $r$, we simply run the brute force algorithm on $V(r)$ for computing $\rel(I)$.
\end{enumerate}
Consider the graph depicted in Figure~\ref{fig:tw} with arbitrary probabilities. We assume $\source = 1$ and $\targets = \{ 7 \}.$ Therefore, in Step~(1), we add $7$ to every bag. Figure~\ref{fig:mainex} shows the iterations of Step~(2), i.e.~each panel shows one application of shrinking and merging. The figure does not show the probabilities. In each iteration, a leaf bag $b$ with parent $p$ is chosen, it is removed from the tree decomposition and the vertices that only appeared in $b$ are deleted from the graph. Moreover, a new edge part is added that covers all possible edges between the vertices in the intersection of $V(b)$ and $V(p).$ It is then merged with the already existing edge parts of $V(b).$ This process continues until only one bag $r$ (the root) remains. At this point, the brute force algorithm is used to compute the reliability.

\begin{figure}
	\includegraphics[keepaspectratio,width=\linewidth]{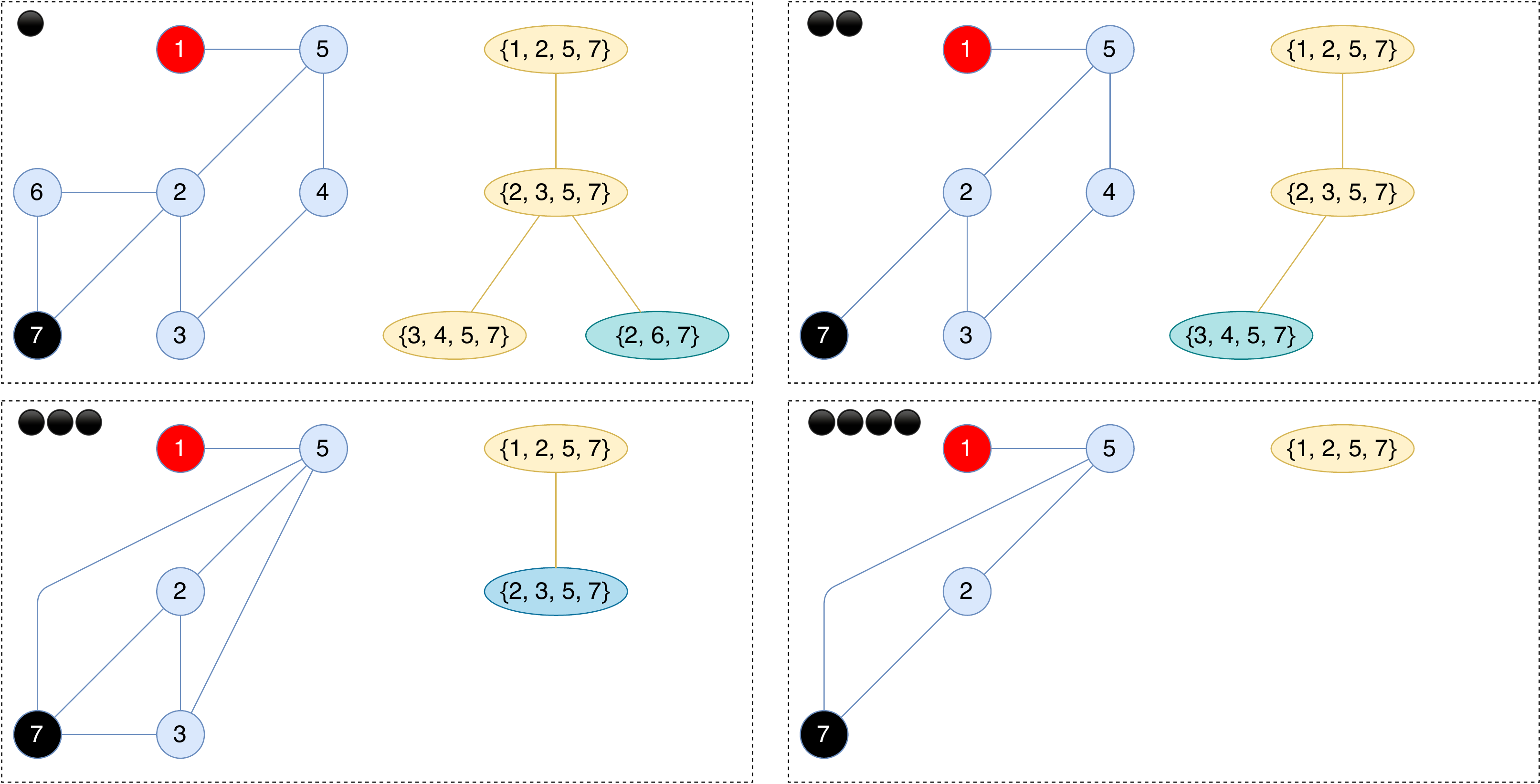}
	\caption{Changes in $G$ and $T$ in every iteration of Step~(2)}
	\label{fig:mainex}
\end{figure}

\begin{lemma}[Correctness]
	The algorithm above correctly computes $\rel(I).$
\end{lemma}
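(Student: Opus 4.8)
The plan is to argue by induction on the number of bags, maintaining two invariants throughout the execution: (a) the current \problem{Extended Network Reliability} instance has reliability exactly $\rel(I)$, and (b) every edge part $E_i$ of the current instance is contained in the vertex set of some bag of the current tree. Invariant (b) is the real workhorse, since it is what guarantees that the hypotheses of the Shrinking Lemma (Lemma~\ref{lemma:shrink}) can actually be met at each step; it is not implied by any single earlier lemma and must be re-established by hand after every iteration.

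First I would dispose of Step~(1). Adding $\target^*$ to every bag changes neither $G$, $\source$, $\targets$, nor $\Prob$, so it leaves $\rel(I)$ untouched and only modifies the decomposition. I would check that the modified object is still a valid tree decomposition: the two coverage conditions are unaffected, and the connectedness condition for $\target^*$ now holds trivially because $\bags_{\target^*}$ becomes all of $T$, which is connected. At the same time I would view the input \problem{Network Reliability} instance as an \problem{Extended Network Reliability} instance in which every part is a single edge; invariant (b) then holds initially because, by the edge-coverage axiom, the two endpoints of each edge lie together in some bag.

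Next I would treat one iteration of Step~(2), the heart of the argument. Let $b$ be the chosen leaf and $p$ its parent. Deleting the tree edge $\{p,b\}$ splits $T$ into the single-bag component $\{b\}$ and the rest, so by the Cut Property (Lemma~\ref{lemma:cut}) the pair $(A,B) = (\bigcup_{a \neq b} V(a),\, V(b))$ is a separation of $G$ with separator $A \cap B = V(b) \cap V(p)$. I would then verify each hypothesis of Lemma~\ref{lemma:shrink} in turn: (i)~$\source \in A$, because $\source$ lies in the root bag and a leaf of a tree with at least two bags is never the root, so $\source$ is never removed; (ii)~$\target^* \in A \cap B \cap \targets$, which is precisely what Step~(1) secured, since $\target^* \in V(b) \subseteq B$ and $\target^* \in V(p) \subseteq A$; and (iii)~each part is entirely in $A$ or entirely in $B$, which follows from invariant (b): a part contained in a bag $c$ sits inside $B$ when $c=b$ and inside $A$ when $c \neq b$. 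The Shrinking Lemma then yields an instance of equal reliability on vertex set $A$, and the subsequent merging (Lemma~\ref{lemma:merge}) preserves reliability as well, maintaining invariant (a). I would then re-establish invariant (b): the new shrink part consists of edges over $V(b) \cap V(p) \subseteq V(p)$ and the merged part consists of edges over a set $W \subseteq V(p)$, so both lie inside the still-present bag $p$; and $T$ with $b$ removed is a valid decomposition of the shrunken graph, since deleting a leaf preserves connectivity of $T$ and of every subtree $\bags_v$.

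Finally, since each iteration deletes exactly one bag, after $|\bags|-1$ steps only the root $r$ survives, with $|V(r)| = O(k)$; the brute-force routine (Algorithm~\ref{alg:bf}) computes the reliability of this last \problem{Extended Network Reliability} instance exactly, and by the chain of reliability-preserving transformations this value equals $\rel(I)$. I expect the main obstacle to be the bookkeeping inside the inductive step: one must simultaneously confirm the separation and target preconditions (via the Cut Property and Step~(1)) and carefully reprove the ``each part lives in a bag'' invariant after \emph{both} the shrink and the merge, because it is this invariant, rather than any individual lemma, that certifies the Shrinking Lemma is applicable at every iteration.
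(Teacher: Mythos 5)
Your proposal is correct and follows essentially the same route as the paper's own proof: the same two invariants (reliability preservation and ``every part lives in some bag''), the same verification of the Shrinking Lemma's hypotheses via the Cut Property and the addition of $\target^*$ to every bag, and the same concluding appeal to the brute-force step. You additionally spell out a couple of details the paper leaves implicit (validity of the modified decomposition after Step~(1), and the fact that a leaf of a multi-bag tree is never the root), which is fine but not a different argument.
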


\begin{proof}
	Each iteration of Step~2 above reduces the number of bags by one, hence the algorithm terminates. We show that the reliability is preserved after each iteration of Step~2. We also show that before and after each iteration of Step~2, for every correlated edge part $E_i$, there exists a bag $b_{E_i} \in \bags$ such that every endpoint of every edge in $E_i$ is in $V(b_{E_i}).$ Note that initially, each $E_i$ consists of a single edge and hence this property holds by definition of tree decompositions.
	
	In Step~2(b), we can apply the Shrinking Lemma because (i)~$(A, B)$ is a separation of $G$ by the Cut Property (Lemma~\ref{lemma:cut}); (ii)~$\source \in V(r) \subseteq A$; (iii)~$\target^* \in A \cap B \cap \targets$ because $\target^*$ is in the vertex set of every bag; and (iv)~for every edge part $E_i$, there exists a bag such that $E_i$ is entirely in that bag. As shown in Page~\pageref{lemma:shrink}, Shrinking Lemma preserves the reliability. Moreover, by Lemma~\ref{lemma:cut}, we have $A \cap B = V(p) \cap V(b)$ and therefore the newly added edge part is entirely in $A \cap B \subseteq V(p).$ Finally, by Lemma~\ref{lemma:merge}, merging edge parts in Step~2(d) does not change the reliability.
	
	It follows by an easy induction that the reliability is preserved when we reach Step 3. At this point, $\rel(I)$ is computed by the brute force algorithm. Hence, our algorithm computes $\rel(I)$ correctly.
\end{proof}

\smallskip\noindent\textbf{Complexity of Our Algorithm.} The algorithm applies the shrinking lemma $O(n \cdot k)$ times and in each time we have $\vert B \vert \leq k+2$ and $\vert E_B \vert \leq (k+2) \cdot (k+1)$. Hence, the overall runtime for the calls to shrinking lemma is $O(n \cdot k^3 \cdot 2^{(k+2)(k+1)})$. Similarly, the algorithm performs $O(n \cdot k)$ merge operations, each of which on two parts of size at most $2^{\binom{k+2}{2}}.$ Hence each merge operation takes at most $O(k^2 \cdot 2^{(k+2)(k+1)})$ and the overall runtime for merging is also $O(n \cdot k^3 \cdot 2^{(k+2)(k+1)})$. Finally, the algorithm runs the brute force procedure on a graph with at most $\binom{k+2}{2}$ edges, which takes $O(k^2 \cdot 2^{\binom{k+2}{2}})$ time. All the other operations are performed in linear time. Hence the total runtime of our algorithm is $O(n \cdot k^3 \cdot 2^{(k+2)(k+1)}),$ which depends linearly on $n$ and exponentially on $k$. Hence, we have the following theorem:

\begin{theorem*}
	There exists a linear-time fixed-parameter algorithm for solving \problem{Network Reliability} when parameterized by the treewidth, i.e.~when the treewidth is a small constant.
\end{theorem*}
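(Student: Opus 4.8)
The plan is to observe that the theorem is a direct consequence of the algorithm presented above together with its correctness and complexity analyses; the task is to assemble these pieces and confirm that the exponential dependence is confined to the treewidth $k$ while $n$ enters only linearly. First I would reduce \problem{Network Reliability} to \problem{Extended Network Reliability}: a \problem{Network Reliability} instance is exactly an \problem{Extended Network Reliability} instance in which every part $E_i$ is a single edge, so it suffices to solve the extended problem, which is what our algorithm does. Next I would discharge the input assumption that a tree decomposition is supplied. By Bodlaender's algorithm~\cite{bodlaender1996linear}, for a constant $k$ one decides in linear time whether $G$ has treewidth at most $k$ and, if so, produces a $k$-decomposition with $O(n \cdot k)$ bags; rooting this decomposition at a bag containing $\source$ (permissible since the tree may be rooted arbitrarily) meets the algorithm's requirements.

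The correctness half is already delivered by the Correctness Lemma: at every iteration of Step~2 the Shrinking Lemma (Lemma~\ref{lemma:shrink}) and the merging step (Lemma~\ref{lemma:merge}) each preserve $\rel(I)$, and the brute force procedure (Algorithm~\ref{alg:bf}) computes the reliability exactly on the single remaining bag. For the complexity half I would count the work explicitly. Step~2 executes $O(n \cdot k)$ times, once per bag removal. In each call to the Shrinking Lemma the set $B$ is a single bag, whose size is at most $k+2$ after adding $\target^*$, so $\vert E_B \vert \leq (k+2)(k+1)$ and the call costs $2^{(k+2)(k+1)} \cdot \mathrm{poly}(k)$; each merge is bounded analogously on parts of size at most $2^{\binom{k+2}{2}}$, and the final brute force runs on a graph with at most $\binom{k+2}{2}$ edges.

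Summing these contributions yields $O(n \cdot k^3 \cdot 2^{(k+2)(k+1)})$, which is linear in $n$ with all superpolynomial growth isolated in the parameter $k$ --- precisely the defining property of a linear-time fixed-parameter algorithm. Combining this bound with the correctness established above then gives the theorem.

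The main obstacle, and the point I would verify most carefully, is the invariant that keeps the exponent a function of $k$ alone: throughout the algorithm every correlated part $E_i$ must remain contained in the vertex set of some bag, as this is what bounds $\vert E_B \vert$ by $O(k^2)$ in each shrinking call. The Cut Property (Lemma~\ref{lemma:cut}) guarantees $A \cap B = V(p) \cap V(b)$, so the part produced by shrinking lives inside $V(p)$; merging it with the existing parts of $V(p)$ in Step~2(d) therefore yields a part still localized to a single bag. Were this localization to fail, merged parts could accumulate edges spanning many bags and the $2^{O(k^2)}$ factor would degrade, so confirming that this invariant is propagated by every shrink-and-merge iteration is the crux of turning the per-step bounds into the global linear-time guarantee.
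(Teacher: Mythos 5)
Your proposal is correct and follows essentially the same route as the paper: it assembles the reduction to \problem{Extended Network Reliability}, Bodlaender's linear-time construction of a $k$-decomposition with $O(n\cdot k)$ bags, the Correctness Lemma, and the per-iteration cost bounds into the stated $O(n \cdot k^3 \cdot 2^{(k+2)(k+1)})$ runtime. You also correctly identify the crucial invariant --- that every correlated part stays inside a single bag --- which is exactly what the paper's correctness argument maintains by induction to keep the exponent a function of $k$ alone.
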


\smallskip\noindent\textbf{Pseudocode.} Our approach is summarized in Algorithm~\ref{algo:main}.

\begin{algorithm}[H]
	\SetKwInOut{Input}{Input}\SetKwInOut{Output}{Output}
	\Input{A \problem{Network Reliability} instance $I = (G, \source, \targets, \Prob)$ or equivalently an \problem{Extended Network Reliability} instance $I = (G, E_1, \ldots, E_m, \source, \targets, \Prob_1, \ldots, \Prob_m)$ in which $\vert E_i \vert = 1$ for every $i$; and a $k$-decomposition $T = (\bags, E_T)$ of $G$ rooted at $r$ with $\source \in V(r).$ 
	}
	\Output{$\rel(I)$}
	
	$\target^* \leftarrow $ an arbitrary vertex in $\targets$\;
	
	\ForEach{$b \in \bags$}
	{
		$V(b) \leftarrow V(b) \cup \{ \target^* \}$\;
	}
	
	\While{$\vert \bags \vert > 1$}
	{
		$b \leftarrow $ an arbitrary leaf bag in $\bags$\;
		$A \leftarrow \cup_{a \in \bags \setminus \{b\}} V(a)$\;
		$B \leftarrow V(b)$\;
		Shrinking-Lemma($I, A, B$)\;
		$p \leftarrow b . $parent\;
		$\bags \leftarrow \bags \setminus \{b\}$\;
		\While{$\exists E_i, E_j$ s.t. $\forall e=\{u, v\} \in E_i \cup E_j~~u, v \in V(p)$}
		{
			$I \leftarrow I_{i, j}$
		}
	}

	\textbf{return} Brute-Force($I$)\;
	
	\caption{Computing \problem{Network Reliability} using a Tree Decomposition}\label{algo:main}
\end{algorithm}

\section{Implementation and Experimental Results} \label{sec:exp}

We implemented our approach in Java. Our code is available at \url{https://ist.ac.at/~akafshda/reliability}. We used a tool called \textsc{FlowCutter}~\cite{hamann2018graph} for computing the tree decompositions. \textsc{FlowCutter} applies a state-of-the-art heuristic algorithm to find tree decompositions of small width. However, it is not guaranteed to find an optimal tree decomposition. In each case, we limited \textsc{FlowCutter} to a maximum runtime of 10 minutes.

We experimented with the subway networks of several major cities, including Berlin, London, Tehran, Tokyo and Vienna. Table~\ref{tab:results} provides a summary of the instances. Notably, it shows that all of these major subway networks have a small treewidth. In each case, we set the source and target vertices as the subway stations next to some of the major universities. Specifically:
\begin{itemize}
	\item In Berlin, we used the Technical University of Berlin (Ernst-Reuter-Platz) as the source and the Freie University (Thielplatz) and the Humboldt University (Friedrichstrasse) as the targets.
	\item In London, we used the London School of Economics (Holborn) as the source and the King's College (Temple), Imperial College (South Kensington) and University College London (Euston Square) as the targets.
	\item In Tehran, we used Amirkabir University (Teatr-e Shahr) as the source and Sharif University (Daneshgah-e Sharif), University of Tehran (Meydan-e Enqelab) and the Iran University of Science and Technology (Daneshgah-e Elm o San'at) as the targets.
	\item In Tokyo, we used the University of Tokyo (Okachimachi) as the source and Keio University (Mita) and Waseda University (Waseda) as the targets.
	\item In Vienna, we used the University of Vienna (Schottenring) as the source and the Technical University of Vienna (Karlsplatz) as the target.
\end{itemize}

In our experiments, we assumed that every edge of the network appears with the same probability $p$. We provide experimental results for different values of $p$ between $0$ and $1$ with step size $0.05$. Note that this is not a requirement of our algorithm, which can handle different probability values for different edges. However, we did not have access to the failure probabilities of the connections in the subway networks, given that such information is classified in most countries. 

\begin{table}
	\centering
	\begin{tabular}{|c|c|c|c|c|c|}
		\hline
		Network & $n$ & $\vert \targets \vert$ & $k$ & Runtime (s)\\
		\hline \hline
		Berlin U-Bahn & $173$ & $2$ & $3$ & $1.4$ \\ \hline
		London Tube & $261$ & $3$ & $5$ & $1.9$\\ \hline
		Tehran Metro & $103$ & $3$ & $2$ & $1.2$ \\ \hline
		Tokyo Subway & $200$ & $2$ & $6$ &  $12.4$\\ \hline
		Vienna U/S-Bahn & $141$ & $1$ & $5$ & $1.6$ \\
		\hline
	\end{tabular}
	\caption{A Summary of Our Benchmark Networks. In each case $n$ is the number of vertices in the network, $\vert \targets \vert$ is the number of target vertices, and $k$ is the width of the obtained tree decomposition. The runtimes are reported in seconds and are the average runtime over all $p$. }
	\label{tab:results}
\end{table}

As shown in Table~\ref{tab:results}, our algorithm is extremely efficient and, in all these real-world cases, answers the \problem{Network Reliability} problem in just a few seconds. In contrast, previous exact approaches such as~\cite{mohammadi2016divisors,saenz2015hilbert} could only handle academic examples with less than 10 vertices. Figure~\ref{fig:exp} provides a summary of our experimental results.

\begin{figure}[H]
	\centering
	\includegraphics[keepaspectratio,width=.5\linewidth]{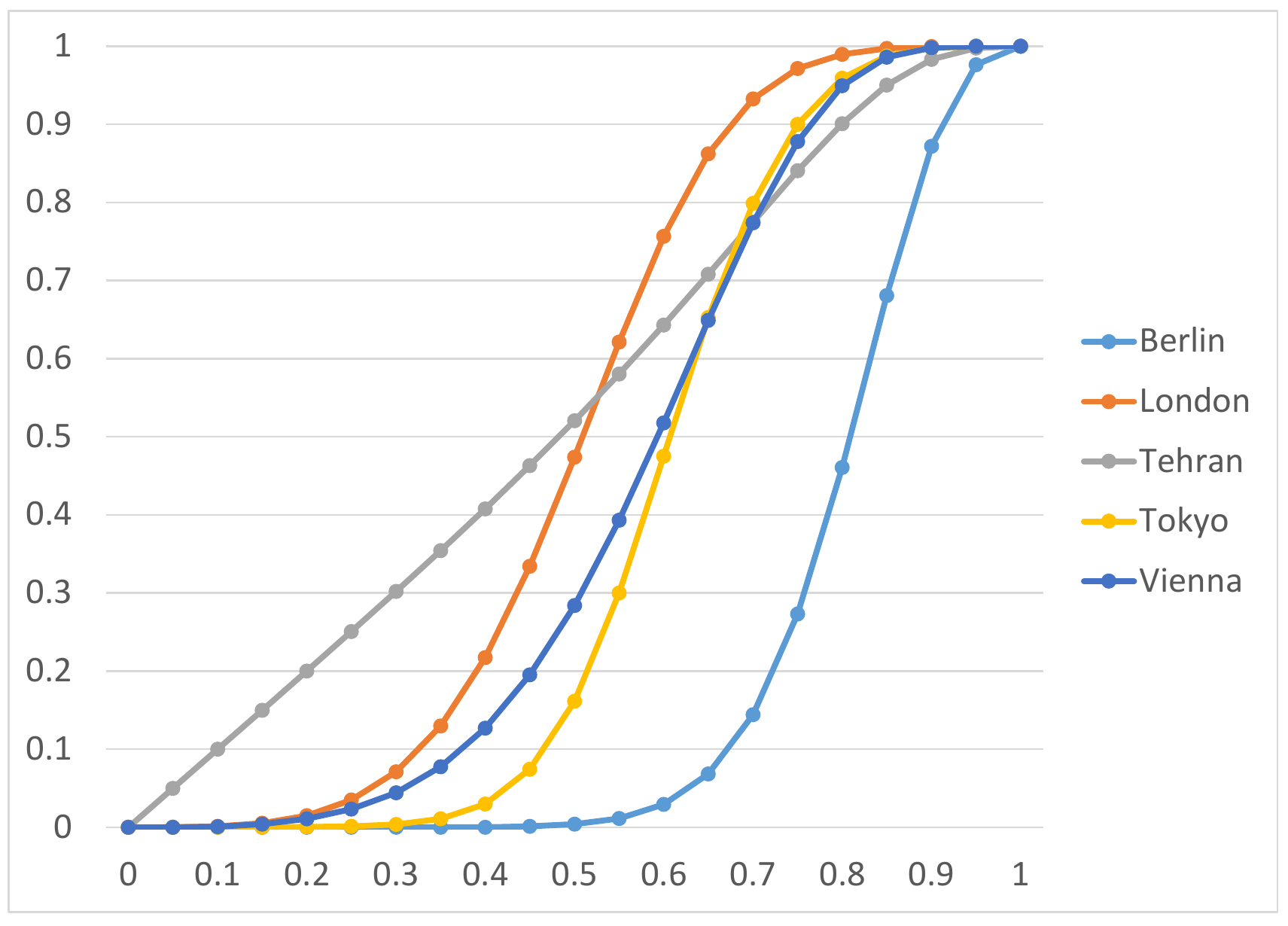}
	\caption{Our Experimental Results. The $x$-axis is the probability $p$ of the appearance of each edge of the network, and the $y$-axis is the reliability $\rel(I).$}
	\label{fig:exp}
\end{figure}

\section{Conclusion}
In this paper, on the theoretical side, we presented a linear-time algorithm for computing \problem{Network Reliability} on graphs with small treewidth. Our algorithm uses the concept of kernelization, i.e.~it repeatedly transforms an instance into a smaller one with the same reliability. On the experimental side, we showed that subway networks of several major cities have small treewidth and hence our algorithm can be applied to them. We also demonstrated that our algorithm is extremely efficient and can handle these real-world instances in a few seconds, while previous exact methods could only handle academic examples with a handful of edges.

\vspace{0.5cm}
\smallskip\noindent\textbf{Acknowledgments.}
The research was partially supported by the EPSRC Early Career Fellowship EP/R023379/1, Grant \textnumero~SC7-1718-01 of the London Mathematical Society, an IBM PhD Fellowship, and a DOC Fellowship of the Austrian Academy of Sciences (\"{O}AW).

\newpage
\section*{References}

\end{document}